\newtheorem{assume}{Assumption}
\title[Personalized Dose Guidance using Safe Bayesian Optimization]{Personalized Dose Guidance  using Safe Bayesian Optimization}
 \author{\Name{Dinesh Krishnamoorthy} \Email{d.krishnamoorthy@tue.nl}\\
  \addr  Department of Mechanical Engineering, Eindhoven University of Technology, Eindhoven 5600 MB,  The Netherlands
  \AND
  \Name{Francis J. {Doyle III}} \Email{frank\_doyle@seas.harvard.edu}\\
  \addr Harvard John A. Paulson School of Engineering and Applied Sciences, Allston , 02134 MA, USA
 }
\begin{document}

\maketitle

\begin{abstract}
	This work considers the problem of personalized dose guidance using Bayesian optimization that learns the optimum drug dose tailored to each individual, thus improving therapeutic outcomes. Safe learning using interior point method ensures patient safety with high probability. This is demonstrated using the problem of learning the optimum bolus insulin dose  in patients with type 1 diabetes to counteract the effect of meal consumption. Starting from no \textit{a priori} information about the patients, our dose guidance algorithm is able to improve the therapeutic outcome (measured in terms of \% time-in-range) without jeopardizing patient safety. Other potential healthcare applications are also discussed.
\end{abstract}
\begin{keywords}
Bayesian optimization, Safe learning, Personalized medicine
\end{keywords}

\section{Introduction}
Treatment of a disease or medical condition often involves administering therapeutic agents (e.g. drugs, radiation) to trigger a desired response. Often precise doses of therapeutic agents  must be administered in order to get better treatment outcomes. However, the triggered response  of a certain therapeutic agent can vary significantly from one individual to the other due to several  physiologic, genetic, and/or environmental factors.
This affects the efficacy of a drug from one individual to another. Moreover, ensuring patient safety  (e.g. overdose) further adds to this challenge. 

As such, for a particular dose, some patients may have very little therapeutic benefit, while others may have unwanted side effects due to drug overdose. Knowing the right dose of therapeutic agents to trigger a desired response in a given individual is not trivial. Pharmacokinetic and pharmacodynamic models developed based on a population level, or  clinical data averaged over a cohort of patients are often  not well suited to find the optimal drug dose for a particular individual \citep{tucker2017personalized}. Targeted treatment that accounts for the individual variations is key to improve therapeutic outcomes. 
Personalized medicine is an emerging practice in medicine that uses information from the individual to guide therapeutic decisions. Personalizing the dose for each individual, ensures that the patients receive the right dose of the therapeutic agent, thus improving the treatment outcomes, mitigate unwanted side effects, and reduce  healthcare costs.

For example, patients with type 1 diabetes need to administer  rapid-acting insulin  to counteract the effect of the meal consumption (known as bolus insulin). Even for the same meal, the insulin needs can vary significantly from one individual to the other. Effective management of type 1 diabetes  thus requires finding the right dose of bolus insulin personalized to the individual in order  to counteract the effect of meal consumption, and maintain the blood glucose concentration within a desired target range.  This work  shows that starting from no prior knowledge about the patient, nor historical patient-data, safe Bayesian optimization \citep{DK2022constrainedBayesOpt} is able to learn the optimal  insulin dose tailored for each individual, without jeopardizing patient safety, thus improving the therapeutic outcome, measured as percentage of  time spent within the target range for the glucose concentration \citep{battelino2019clinical}.

\section{Technical details}

To develop personalized dose guidance algorithm, firstly we need some performance metric that quantifies the quality of the  response triggered by the dose (i.e., the reward).  In addition to the dose $ x \in \mathcal{X} $, the performance metric may also depend on external factors or contexts $ d  \in \mathbb{R}^{n_{d}}$. Let the reward function  be denoted by $ f_{0}(x,d)  $. Secondly, suitable metrics that quantify safety-critical constraints, denoted by $ f_{i}(x,d) $ for all $ i=1,\dots,m $, are also required in order to account for patient safety. The optimal dose guidance problem can then be formulated as
\begin{align}\label{Eq:Opt}
	x^* = \arg \max_{x\in \mathcal{X}} \; &f_{0}(x,d) \\
	\textup{s.t.} \; & f_{i}(x,d) \ge0 ,\; \forall i=1,\dots,m	\nonumber
\end{align}
However,  obtaining accurate models  $ \{f_{i}(x,d)\}_{i=0}^m $ tailored to an individual patient is prohibitively time consuming and expensive for the patient, healthcare personnel, and the engineers.  
If the performance metrics and the constraints can be observed after administering  a dose on an individual, then we can learn the optimum dosing needs for that individual by sequentially evaluating different doses (by carefully trading-off exploration and exploitation), and find the dose that gives the best response.  

Bayesian optimization is one such black-box decision-making framework that learns the optimum of the unknown, but observable system, by sequentially querying the system \citep{shahriari2015taking,jones1998efficient}.  
Bayesian optimization uses probabilistic surrogate models, typically Gaussian processes (GP), to capture our belief of the unknown functions.The GP posteriors are obtained by conditioning on the observations. 
The GP posterior conditioned on the data set $ \mathcal{D}^{k-1} $ is then used to induce an acquisition function $ \mathcal{A}: \mathcal{X} \times \mathbb{R}^{n_{d}} \rightarrow \mathbb{R}  $, that is indicative of how desirable evaluating a dose $ x $ is expected to be with respect to the reward function $ f_{0}(x,d) $ \citep{shahriari2015taking}. Learning from patient-specific data enables personalized decision-making. 

In the presence of safety-critical constraints that cannot be violated, one needs to incorporate safe learning when computing the  next dose $ x^{k+1} $ that needs to be evaluated. 
To account for the constraints, Gaussian process surrogates  are placed for each safety-critical constraint. Let the mean and the std. deviation for the $ i^{th} $ constraint
GP conditioned on the past $ k-1 $   observations be denoted by $ \mu_i^{k-1} $ and $ \sigma_i^{k-1} $, respectively. After  $ k-1 $ evaluations, the safe region is partially revealed, which can be quantified as \[ \hat{\mathcal{F}}^{k-1} := \{x \in \mathcal{X} | \mu_i^{k-1}(x,d)  - \sigma_i^{k-1}(x,d) \geq 0 \} \] 
such that $ \hat{\mathcal{F}}^{k-1} \subseteq \mathcal{X}$. 

The  key idea of incorporating safe learning using interior-point method is then as follows.
Appending any suitable acquisition function $ \mathcal{A}(x,d) $ with log barrier terms $  \mathcal{B}_{i}(x,d) := \ln[\mu_i^{k-1}(x,d)  - \sigma_i^{k-1}(x,d)]   $
ensures that the next query point $ x^k $ remains in the  interior of the partially revealed safe region $ \hat{\mathcal{F}}^{k-1}  $.  Therefore, for the given context $ d^k $, the dose to be evaluated is
\begin{equation}\label{Eq:acq2}
	x^{k}=  \arg \max_{x \in \mathcal{X}} \; \mathcal{A}(x,d^k) - \tau \sum_{i=1}^{m}\mathcal{B}_{i}(x,d^k)
\end{equation}
	for some $ \tau >0 $. 
	We assume that the feasible region $ \mathcal{F} := \{x\in \mathcal{X} | f_{i}(x)\ge 0, \forall i = 1,\dots,m\} \neq \O $, and a safe, but suboptimal action $ x_{0} \in \mathcal{F} $ is known $ \forall d $.
This is a reasonable assumption  since a safe but suboptimal dose would correspond to a drug dose of zero, i.e., no medical intervention.
As such, if the partially revealed safe region is an empty set $ \hat{\mathcal{F}}^{k-1}(d)  = \O$, then the safe but suboptimal dose $ x^{k} = x_{0} $ is applied.
\begin{assume}
	The Gaussian process models used as surrogate for the constraints satisfy \[ |f_{i}(x,d) - \mu_{i}^{k-1}(x,d)| \le \sqrt{\beta^{k}} \sigma_{i}^{k-1}(x,d) \]w.p. at least $ 1-\delta $, $ \forall k>0 $, $ \forall i=1,\dots,m $.
\end{assume}
This assumption can be satisfied by choosing the confidence scaling parameter $ \beta $ as shown in \cite[Theorem 2]{chowdhury2017kernelized}.
\begin{theorem}
	Under Assumption 1,  the dose computed by \eqref{Eq:acq2} satisfies $ f_{i}(x^k,d^k) \ge0$ w.p. at least $ 1-\delta $, $ \forall k>0 $, $ \forall i=1,\dots,m $.
\end{theorem}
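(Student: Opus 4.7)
The plan is to decouple the argument into a deterministic, geometric part about where the maximizer of \eqref{Eq:acq2} can lie, and a probabilistic part that invokes Assumption 1 to lift a surrogate-based safety certificate to a statement about the true constraints $f_i$.

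First, I would observe that the log-barrier term $\mathcal{B}_i(x,d)=\ln[\mu_i^{k-1}(x,d)-\sigma_i^{k-1}(x,d)]$ is real-valued only on the open set where $\mu_i^{k-1}-\sigma_i^{k-1}>0$, and diverges to $-\infty$ as this quantity approaches zero from above. Consequently, whenever $\hat{\mathcal{F}}^{k-1}(d^k)\neq\emptyset$ and the barrier-augmented objective admits a finite value, any maximizer $x^k$ of \eqref{Eq:acq2} must satisfy $\mu_i^{k-1}(x^k,d^k)-\sigma_i^{k-1}(x^k,d^k)>0$ for every $i$; that is, $x^k$ lies in the open interior of the partially revealed safe region. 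The fallback rule handles the only remaining case: if $\hat{\mathcal{F}}^{k-1}(d^k)=\emptyset$, then $x^k=x_0\in\mathcal{F}$ by the standing assumption, and feasibility $f_i(x^k,d^k)\ge 0$ holds deterministically.

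Second, I would invoke Assumption 1 to convert the surrogate-level certificate $\mu_i^{k-1}(x^k,d^k)-\sigma_i^{k-1}(x^k,d^k)>0$ into the true constraint inequality. Since $|f_i-\mu_i^{k-1}|\le\sqrt{\beta^k}\,\sigma_i^{k-1}$ holds uniformly with probability at least $1-\delta$, one obtains on this event $f_i(x^k,d^k)\ge \mu_i^{k-1}(x^k,d^k)-\sqrt{\beta^k}\,\sigma_i^{k-1}(x^k,d^k)\ge 0$, provided the log-barrier uses the same calibrated width $\sqrt{\beta^k}\,\sigma_i^{k-1}$ that appears in the concentration bound. A union bound over $i=1,\dots,m$ absorbs the finitely many constraints into the same $1-\delta$ event (after, if desired, rescaling $\delta$).

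The main obstacle I anticipate is reconciling the scaling: as written, $\hat{\mathcal{F}}^{k-1}$ and $\mathcal{B}_i$ are defined using the raw $\mu-\sigma$, whereas Assumption 1 supplies a $\mu-\sqrt{\beta^k}\,\sigma$ lower bound. To make the argument airtight, I would read $\sigma_i^{k-1}$ in those definitions as shorthand for the calibrated posterior standard deviation $\sqrt{\beta^k}\,\sigma_i^{k-1}$, or equivalently rewrite the barrier as $\ln[\mu_i^{k-1}-\sqrt{\beta^k}\,\sigma_i^{k-1}]$ before running the argument above. A minor subsidiary check is existence of the $\arg\max$ in \eqref{Eq:acq2} when $\hat{\mathcal{F}}^{k-1}(d^k)\neq\emptyset$, which is secured by compactness of $\mathcal{X}$ together with continuity of $\mathcal{A}$ and of the GP posterior moments.
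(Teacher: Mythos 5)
Your proposal is correct and follows essentially the same route as the paper: the log barrier confines $x^k$ to the interior of the surrogate-certified safe set, and Assumption~1 lifts that certificate to the true constraints $f_i(x^k,d^k)\ge 0$ on the $1-\delta$ event. Notably, you correctly flag the $\sqrt{\beta^k}$ scaling mismatch between the stated definition of $\mathcal{B}_i$ (which uses $\mu_i^{k-1}-\sigma_i^{k-1}$) and the concentration bound in Assumption~1; the paper's own proof silently asserts that the barrier ensures $\mu_i^{k-1}(x^k,d^k)-\sqrt{\beta_i^k}\,\sigma_i^{k-1}(x^k,d^k)>0$, which only holds if the barrier is read with the calibrated width exactly as you propose, so your treatment (together with the explicit handling of the fallback case $\hat{\mathcal{F}}^{k-1}=\O$ and the existence of the maximizer) is the more airtight of the two.
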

\begin{proof}
	Assumption~1  implies that $\forall i \in \mathbb{I}_{1:m} $, $ \forall x \in \mathcal{X} $
\begin{equation}\label{Eq:prf1}
	f_{i}(x,d) \ge \mu^{k-1}_{i}(x,d)  - \sqrt{\beta^{k}_{i}} \sigma^{k-1}_{i}(x,d)  
\end{equation}
holds w.p. at least $ 1-\delta $ for any $ \delta\in (0,1) $. 

If $ \forall k>0  $, $ \exists\;  x^{k}\in  \mathcal{X} $  given by  (2), the log barrier term $ \mathcal{B}_{i}(x,d^k) $  ensures 
\begin{equation}\label{Eq:prf2}
	\mu^{k-1}_{i}(x^{k},d^k)  - \sqrt{\beta^{k}_{i}} \sigma^{k-1}_{i}(x^{k},d^k)  > 0, \quad \forall i\in \mathbb{I}_{1:m}
\end{equation}
holds. Combining \eqref{Eq:prf1} and \eqref{Eq:prf2}  proves our result. 
\end{proof}

\section{\textit{In silico} experimental results}
Type 1 diabetes (T1D) is a metabolic disorder characterized by absolute insulin deficiency, which requires injecting bolus (fast-acting) insulin  along with each meal in order to counteract the spike in the blood glucose concentration induced by the meal (postprandial effect). Bolus dose guidance algorithms help patients compute the optimum insulin dose to match the meal carbohydrates (CHO). The current standard-of-care bolus calculators are based on patient specific
parameters such as insulin-carb-ratio, correction factors, insulin sensitivity etc. \citep{schmidt2014bolus}, which are typically unknown. High uncertainty in such parameters are also the norm rather than the exception \citep{garcia2021advanced}, making it challenging to use current bolus calculators.

This work presents a purely data-driven safe and personalized  insulin dose guidance algorithm based on announced meals, that  does not depend on any patient-specific parameters, or  historical clinical data. Safe learning guarantees patient safety with high probability (cf. Theorem~1).
Continous glucose monitor (CGM) measures the blood glucose concentration, from which the reward and the constraint functions are observed. Since the objective is to reduce the postprandial peak glucose following meal consumption, the reward function is given by the maximum observed glucose concentration after meal consumption, as shown in Fig.~\ref{Fig:sim}a. Hypoglycemia, characterized by glucose concentration below 70mg/dl, is a safety critical constraint, which is also observed from the CGM measurement (cf. Fig.~\ref{Fig:sim}a). The objective is to compute the optimal insulin dose $ x $ for a given meal size (categorized for example as S,M,L,XL).

Our algorithm (summarized in Appendix A) is demonstrated on the 10-adult cohort of the US-FDA-accepted virtual patient simulator \citep{man2014uva}, over a period of three weeks with three meals a day, with randomly varying meal sizes. The \textit{in silico} experimental results  show that, starting from no prior knowledge,  our algorithm is able to learn the optimum dose for each patient, thus maintaining the blood glucose concentration within safe targets, and improve the \% time-in-range (cf. Fig.~\ref{Fig:sim}b), which is the main measure of the therapeutic outcome \citep{battelino2019clinical}. We also plot the 5-95\% quantile to verify safety.

 Regarding patient safety, firstly note that blood glucose concentration between $54<BG<70$mg/dl is considered mild to moderate hypoglycaemia, and a blood glucose level below 54mg/dl is characterized as severe hypoglycaemia. In total there were 5 mild hypoglycemia incidents among the entire cohort, where the glucose went just below 70 mg/dl, but never below 54 mg/dl (i.e. no severe hypoglycemia). To be more precise: Subject 2 had 3 days where the glucose went just below 70mg/dl, and Subject 10 had 2 days where the glucose went just below 70mg/dl. The other subjects had no (mild or severe) hypo incidents.
In order to give a clearer picture regarding the safety, the individual patient data is shown  in  Appendix~B (where these 5 mild incidents can be seen).

We use the  same algorithm with identical hyperparameters across all the subjects, i.e., our algorithm does not require any patient-specific tuning/setup.
 The same algorithm was also successfully tested on a cohort of 50 virtual patients based on the Hovorka T1D model \citep{wilinska2010simulation}. Comparing the performance of our algorithm on two completely
different physiological simulators  demonstrate the robustness
of our algorithm towards patient variability.
 Intra-patient variations  such as diurnal patterns, often observed in patients with T1D \citep{hinshaw2013diurnal} were also  successfully tested by incorporating {mealtime} as additional context, and will be presented and benchmarked against current standard-of-care. 
  Results comparing different constrained Bayesian optimization algorithms will also be presented that shows the benefit of using our proposed interior-point methods to handle the safety-critical constraints. 
 \begin{figure*}
 	\centering
 	\includegraphics[width=0.999\linewidth]{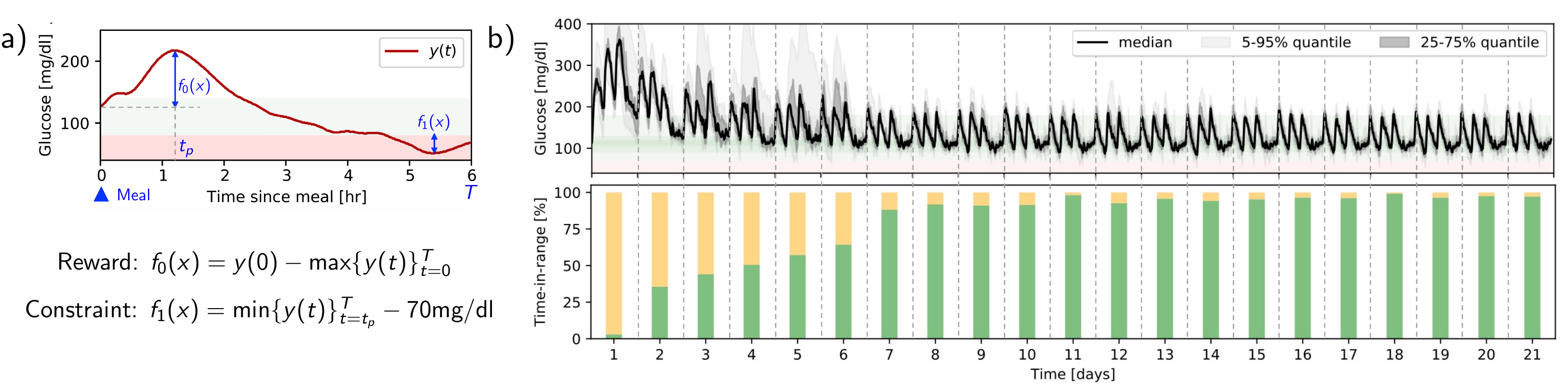}
 	\caption{\small{In silico experimental results on the US-FDA accepted virtual patient cohort. a) Typical postprandial CGM signal  showing the reward $ f_{0} $ and the constraint $ f_{1} $. b) The CGM data and  \% time-in-range (green) and \% time-above-range (yellow)  for the 10-adult cohort  over 3 weeks shows that out algorithm is able to safely learn the optimum insulin dose and improve the treatment outcome. (cf. Appendix B for individual patient data).}  } \label{Fig:sim}
 \end{figure*}
 
\section{Discussion and Conclusion}

This work shows how safe Bayesian optimization using interior point methods can be used to develop personalized insulin dose guidance algorithm to account for the meal consumption in patients with type 1 diabetes. We showed that starting from no prior knowledge about the patient,  Bayesian optimization framework allows to safely learn the dose online by directly interacting with the environment. Online learning  also  allows to easily incorporate human feedback (e.g. pain or discomfort ratings) in the decision-making. For example, in addition to characterizing hypoglycemia constraint as blood glucose below 70mg/dl, it can also be supplemented with the level of symptoms actually experienced by the patient.  

Beyond the application to type 1 diabetes considered in this work, other examples where personalized dose guidance  can be useful include the following. Patients with type 2 diabetes requiring insulin need to  find the optimal basal  (long acting) insulin to keep the fasting blood glucose levels within a target zone \citep{DK2021ESCT2D}. Safe Bayesian optimization as used in this work can be used to titrate the right dose of basal insulin, without causing hypoglycemia. 
Similarly, patients requiring blood thinners to manage thromboembolic disorders need to find the right dose of  anti-coagulants such as warfarin or heparin. Overdosing can  lead to abnormal bruising of excessive bleeding \citep{kuruvilla2001review}, and personalized dose guidance algorithm can be beneficial here.
Cancer treatment using radiotherapy requires finding the right dose of plasma or thermal radiation to kill tumor cells without damaging the healthy cells \citep{luo2015set,bonzanini2020toward}. Safe Bayesian optimization can also be used to find the optimal radiation dose  to reduce side effects, and improve therapeutic outcomes.  As such, personalized dose guidance algorithms using safe Bayesian optimization as presented  in this work can inspire several healthcare applications beyond type 1 diabetes. 

\bibliography{L4DC,diabetes}

\section*{Appendix A- Bolus calculator algorithm}
Given the CGM data $ y(t) $, and a meal announcement $ m $ at time $ t_{m} $, the bolus calculator  computes the bolus dose $ x $ using the algorithm as summarized below.
\begin{figure}[h]
	\centering
	\includegraphics[width=\linewidth]{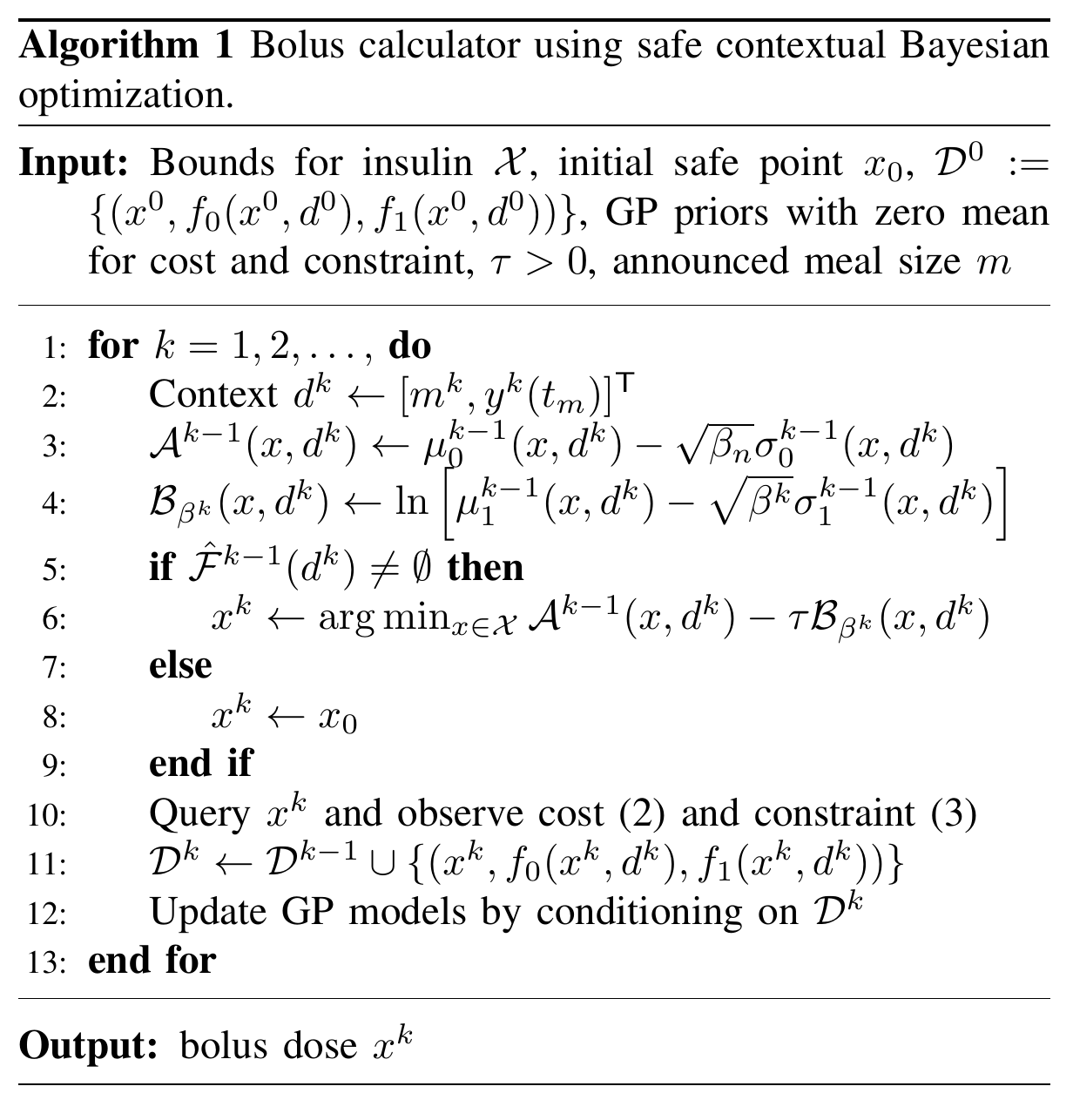}
\end{figure}

\section*{Appendix B - Individual patient data}

The blood glucose concentration of the individual patient data for the 10-adult cohort is shown in Fig.~2. 

\section*{Appendix C- Glossary of medical terms}
\begin{itemize}
	\item Postprandial glucose -   glucose concentrations after eating a meal.
	\item CGM - Continuous glucose monitor sensor
	\item mild hypoglycemia - Low blood glucose concentrations (54$<$BG$<$ 70 mg/dL)
	\item severe hypoglycemia - Critically low  blood glucose concentrations ($<$54 mg/dL)
	\item hyperglycemia - Elevated blood glucose concentration ($>$180mg/dL)
	\item diurnal pattern - Patterns occurring every 24 hours
	\item Bolus insulin - Fast acting insulin use to counteract the effect of meal
	\item Basal insulin - Insulin used to regulate the resting blood glucose concentration 
\end{itemize}
\begin{figure*}[h]
	\centering
		\centering
		\includegraphics[width=0.6\linewidth]{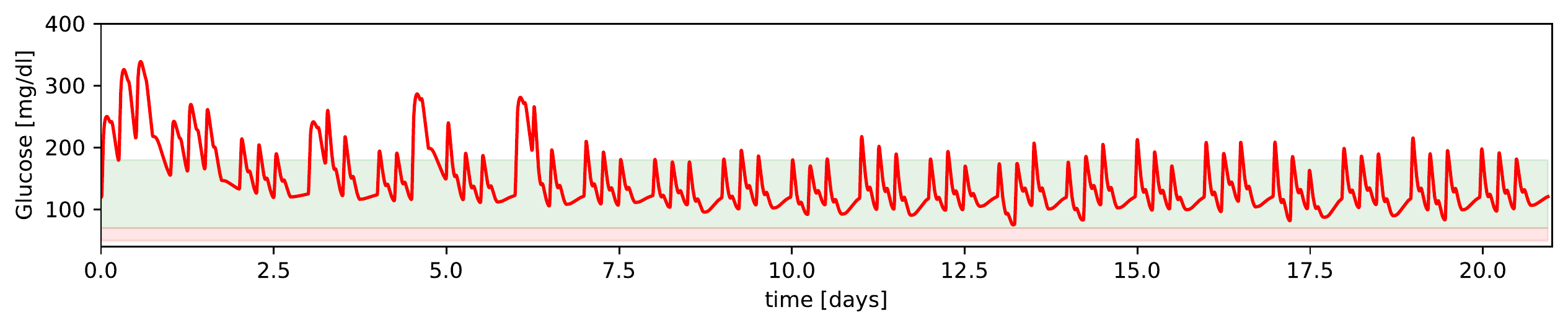}
		\includegraphics[width=0.6\linewidth]{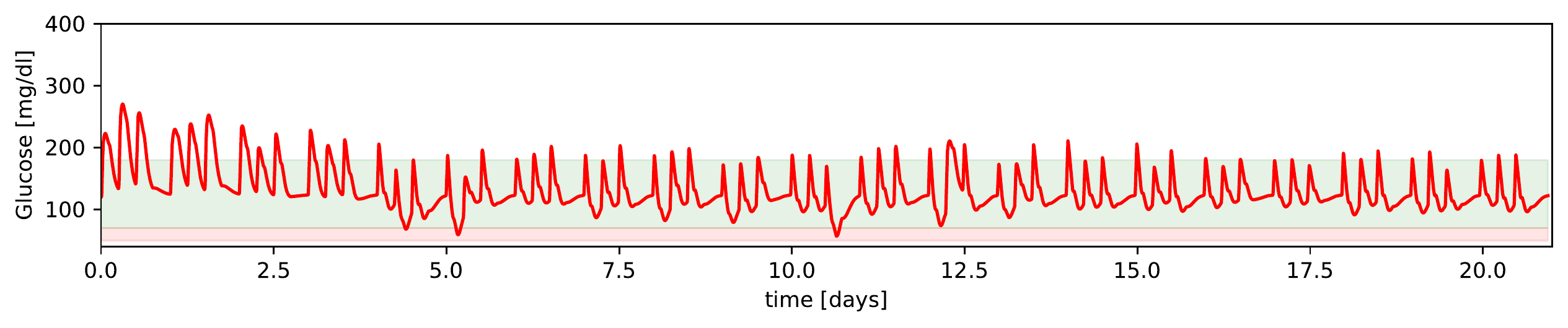}
		\includegraphics[width=0.6\linewidth]{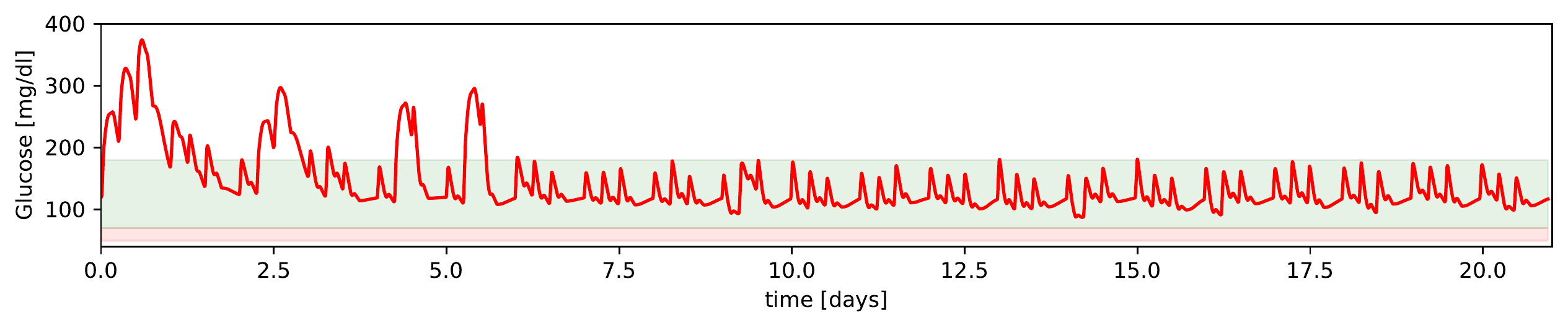}
		\includegraphics[width=0.6\linewidth]{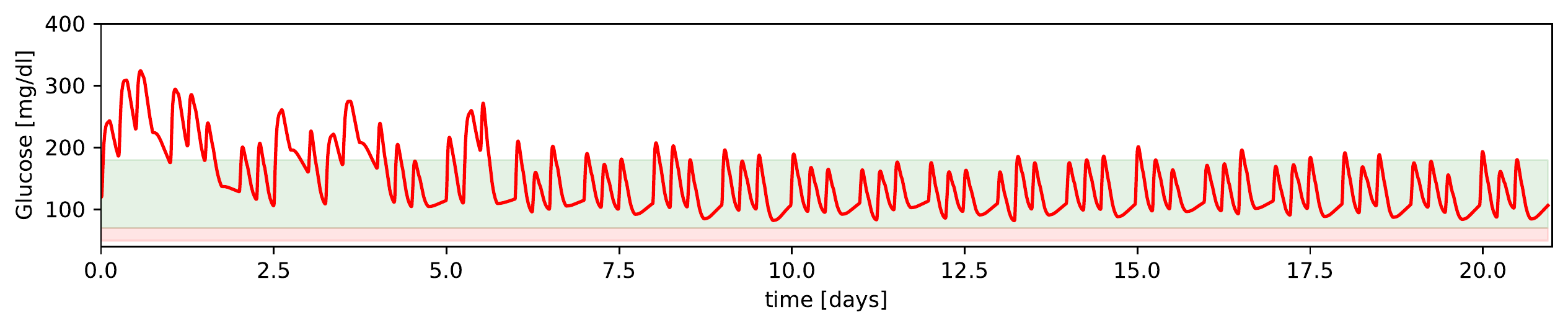}
		\includegraphics[width=0.6\linewidth]{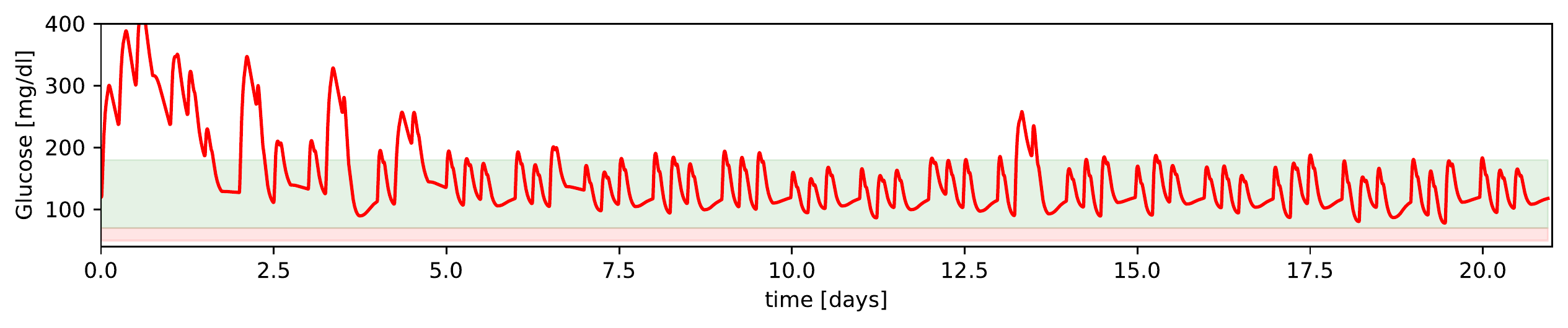}
		\includegraphics[width=0.6\linewidth]{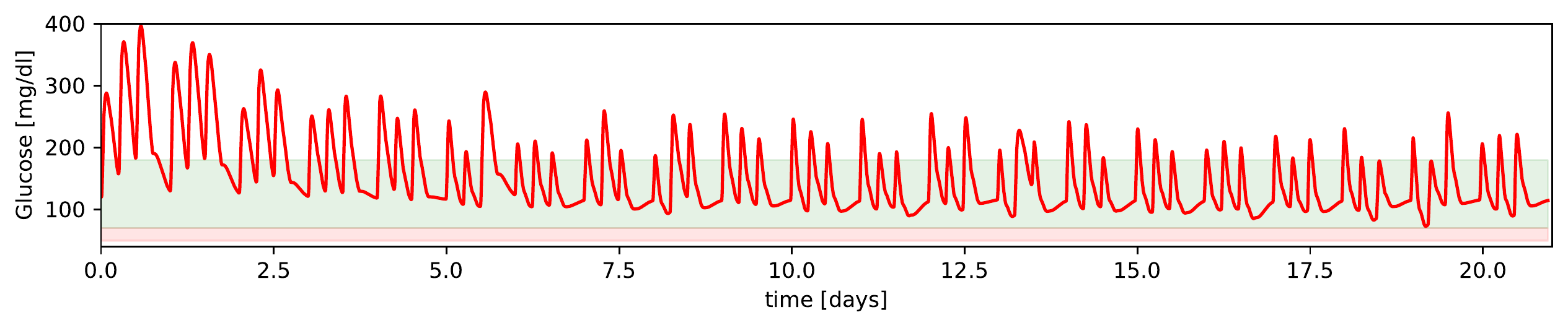}
		\includegraphics[width=0.6\linewidth]{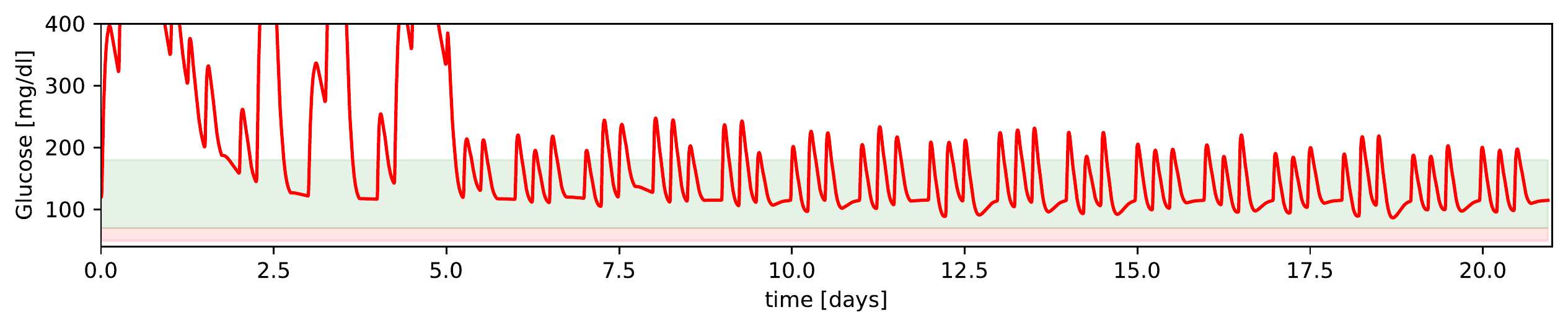}
		\includegraphics[width=0.6\linewidth]{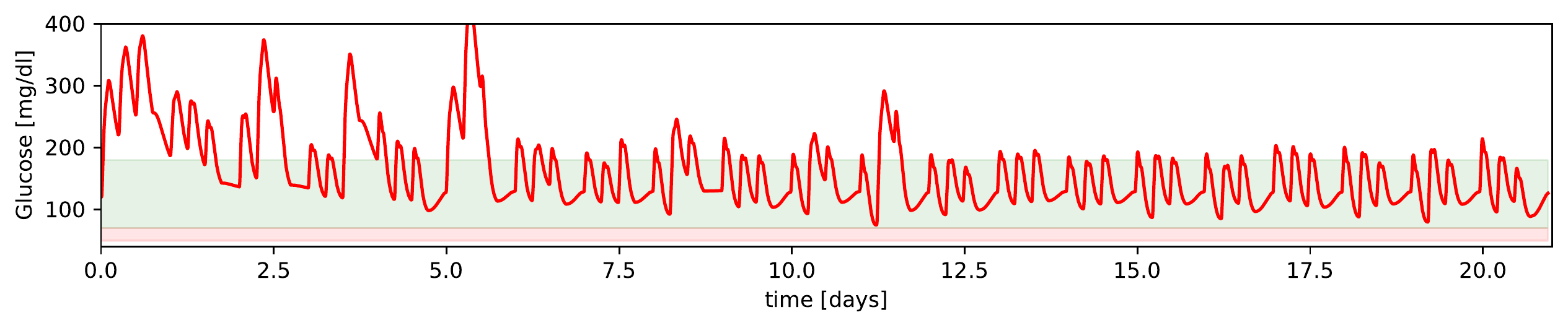}
		\includegraphics[width=0.6\linewidth]{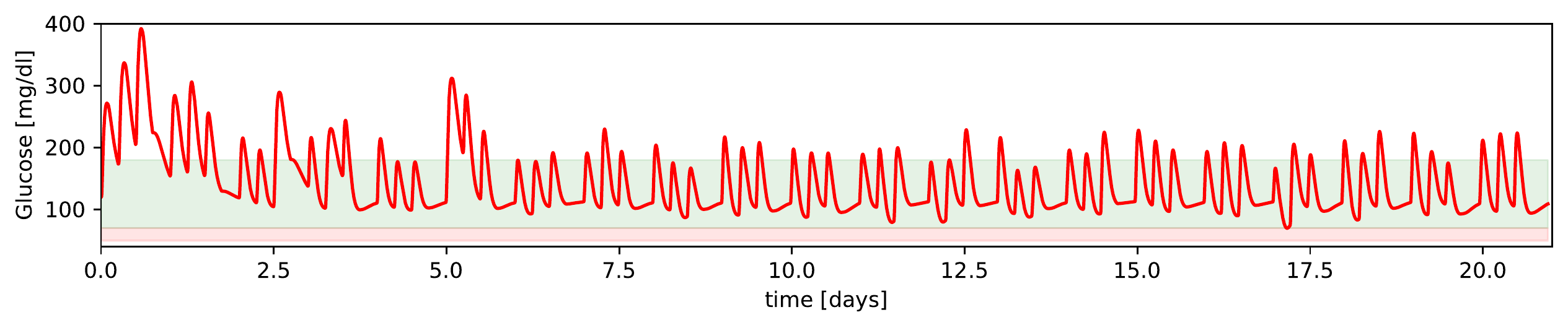}
		\includegraphics[width=0.6\linewidth]{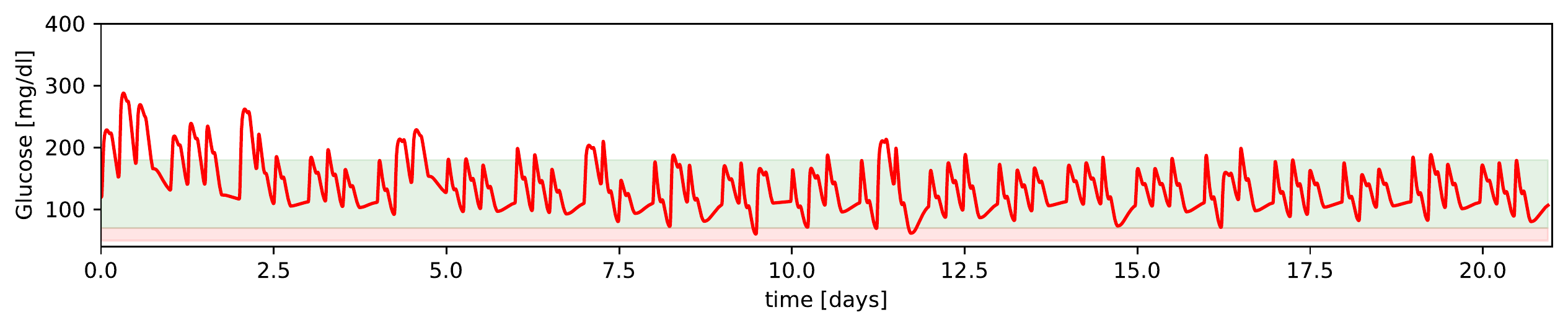}
	\caption{Blood glucose concentration over the three weeks with three meals each day.}\label{Fig:UVA protocolB BG}
\end{figure*}

\end{document}